\numberwithin{equation}{section}
\newcommand{\A}{UP(I,\mathcal{A})}
\newcommand{\abs}[1]{\left\vert#1\right\vert}
\newcommand{\set}[1]{\left\{#1\right\}}
\newcommand{\norm}[1]{\left\Vert#1\right\Vert}
\newcommand{\ls}{\ell^{1}\mbox{-}\underset{i\in{I}}{\oplus}\mathcal{A}_{i}}
\newcommand{\ljs}{\ell^{1}\mbox{-}\underset{\alpha\in{J}}{\oplus}\mathcal{A}_{\alpha}}
\newcommand{\cs}{c_{0}\mbox{-}\underset{i\in{I}}{\oplus}\mathcal{A}_{i_\ast}}
\newcommand{\ccs}{c_{00}\mbox{-}\underset{i\in{I}}{\oplus}\mathcal{A}_{i_\ast}}
\newcommand{\cjs}{c_{0}\mbox{-}\underset{\alpha\in{J}}{\oplus}\mathcal{A}_{\alpha_\ast}}
\newcommand{\linf}{\ell^\infty\mbox{-}\underset{\alpha\in{J}}{\oplus}\mathcal{A}_
		{\alpha}^{\ast}}
\newtheorem{Theorem}{Theorem}[section]
\newtheorem{Proposition}[Theorem]{Proposition}
\newtheorem{lemma}[Theorem]{Lemma}
\theoremstyle{remark}
\newtheorem{Example}[Theorem]{Example}
\newtheorem{Remark}[Theorem]{Remark}
\begin{document}

\title{On Connes amenability of upper triangular matrix algebras}

\author[S. F. Shariati]{S. F. Shariati}

\address{Faculty of Mathematics and Computer Science,
Amirkabir University of Technology, 424 Hafez Avenue, 15914
Tehran, Iran.}

\email{f.shariati@aut.ac.ir}

\author[A. Pourabbas]{A. Pourabbas}
\email{arpabbas@aut.ac.ir}

\author[A. Sahami]{A. Sahami}

\address{Department of Mathematics Faculty of Basic Science, Ilam University, P.O. Box 69315-516 Ilam, Iran.}
\email{amir.sahami@aut.ac.ir}

\keywords{Upper triangular Banach algebras, Connes amenability,  $\phi$-Connes amenability.}


\maketitle

\begin{abstract}
In this paper, we study the notion of Connes amenability for a class of $I\times{I}$-upper triangular matrix algebra $\A$, where $\mathcal{A}$ is a dual Banach algebra with a non-zero $wk^\ast$-continuous character and $I$ is a totally ordered set. For this purpose, we characterize the $\phi$-Connes amenability of a dual Banach algebra $\mathcal{A}$ through the existence of a specified net in $\mathcal{A}\hat{\otimes}\mathcal{A}$, where $\phi$ is a non-zero $wk^\ast$-continuous character. Using this, we show that $UP(I,\mathcal{A})$ is Connes amenable if and only if $I$ is singleton and $\mathcal{A}$ is Connes amenable. In addition, some examples of $\phi$-Connes amenable dual Banach algebras, which is not Connes amenable are given. 
\end{abstract}
\maketitle

\section{Introduction and Preliminaries}
The concept of amenability for Banach algebras was first introduced by B. E. Johnson \cite{John:72}. Let $\mathcal{A}$ be a Banach algebra and let $E$ be a Banach $\mathcal{A}$-bimodule. A bounded linear map $D:\mathcal{A}\rightarrow{E} $ is called a derivation if for every $a,b\in\mathcal{A}$, $D(ab)=a\cdot{D(b)}+D(a)\cdot{b}$. A Banach algebra $\mathcal{A}$ is called amenable if every derivation from $\mathcal{A}$ into each dual Banach $\mathcal{A}$-bimodule ${E}^{\ast}$ is inner, that is, there exists a $x\in{{E}^{\ast}}$ such that $D(a)=a\cdot{x}-x\cdot{a}$ ($a\in{\mathcal{A}}$). Let $\mathcal{A}$ be a Banach algebra. An $\mathcal{A}$-bimodule $E$ is called dual if there is a closed submodule ${E}_{\ast}$ of ${E}^{\ast}$ such that $E=(E_{\ast})^{\ast}$. The Banach algebra $\mathcal{A}$ is called dual if it is dual as a Banach $\mathcal{A}$-bimodule. A dual Banach $\mathcal{A}$-bimodule $E$ is normal, if for each $x\in{E}$ the module maps $\mathcal{A}\rightarrow{E}$; ${a}\mapsto{a}\cdot{x}$ and ${a}\mapsto{x}\cdot{a}$ are $wk^\ast$-$wk^\ast$ continuous.
 The class of dual Banach algebras was introduced by Runde \cite{Runde:2001}. The measure algebras $M(G)$ of a locally compact group $G$, the algebra of bounded operators $\mathcal{B}(E)$, for a reflexive Banach space $E$ and the second dual $\mathcal{A}^{\ast\ast}$ of Arens regular Banach algebra $\mathcal{A}$ are examples of dual Banach algebras. A suitable concept of amenability for dual Banach algebras is the Connes amenability. This notion under different name, for the first time was introduced by Johnson, Kadison, and Ringrose for von Neumann algebras \cite{John:72}. The concept of Connes amenability for the larger class of dual Banach algebras was later extended by Runde \cite{Runde:2001}. A dual Banach algebra $\mathcal{A}$ is called Connes amenable if for every normal dual Banach $\mathcal{A}$-bimodule $E$, every $wk^\ast$-continuous derivation $D:\mathcal{A}\longrightarrow{E}$ is inner. For a given dual Banach algebra $\mathcal{A}$ and a Banach $\mathcal{A}$-bimodule $E$, $\sigma{wc}(E)$ denote the set of all elements $x\in{E}$ such that the module maps $\mathcal{A}\rightarrow{E}$; ${a}\mapsto{a}\cdot{x}$ and ${a}\mapsto{x}\cdot{a}$
are $wk^\ast$-$wk$-continuous, one can see that, it is a closed submodule of $E$. If $\theta:E\longrightarrow{F}$ is a bounded $\mathcal{A}$-bimodule homomorphism, where $F$ is another Banach $\mathcal{A}$-bimodule, then $\theta(\sigma{wc}(E))\subseteq\sigma{wc}(F)$. Runde also showed that $E=\sigma{wc}(E)$ if and only if $E^{\ast}$ is a normal dual Banach $\mathcal{A}$-bimodule  \cite[Proposition 4.4]{Runde:2004}. Let $\mathcal{A}$ be a Banach algebra. The projective tensor
product $\mathcal{A}\hat{\otimes}\mathcal{A}$ is a Banach $\mathcal{A}$-bimodule with the usual
left and right operations with respect to $\mathcal{A}$. Then the map $\pi:\mathcal{A}\hat{\otimes}\mathcal{A}\longrightarrow\mathcal{A}$ defined by $\pi(a\otimes{b})=ab$ is an $\mathcal{A}$-bimodule homomorphism.  Since $\sigma{wc}(\mathcal{A}_{\ast})=\mathcal{A}_{\ast}$, the adjoint of $\pi$ maps $\mathcal{A}_{\ast}$ into $\sigma{wc}(\mathcal{A}\hat{\otimes}\mathcal{A})^{\ast}$. Therefore, $\pi^{\ast\ast}$ drops to an $\mathcal{A}$-bimodule homomorphism $\pi_{\sigma{wc}}:(\sigma{wc}(\mathcal{A}\hat{\otimes}\mathcal{A})^{\ast})^{\ast}\longrightarrow\mathcal{A}$. Any element $M\in{(\sigma{wc}(\mathcal{A}\hat{\otimes}\mathcal{A})^{\ast})^*}$ satisfying
\begin{center}
	$a\cdot{M}=M\cdot{a}\quad$ and $\quad{a}\cdot\pi_{\sigma{wc}}M=a\quad(a\in{\mathcal{A}})$,
\end{center}
is called a $\sigma{wc}$-virtual diagonal for $\mathcal{A}$. Runde showed that a dual Banach algebra $\mathcal{A}$ is Connes amenable if and only if there is a $\sigma{wc}$-virtual diagonal for $\mathcal{A}$ \cite[Theorem 4.8]{Runde:2004}.

Let $\mathcal{A}$ be a  Banach algebra and let $I$ be a totally ordered set.
 Sahami \cite{Sahami:2016} studied the notions of amenability and its related homological notions for a class of $I\times{I}$-upper triangular matrix algebra  
$$UP(I,\mathcal{A})=\set{\left[
	\begin{array}{rr} a_{i,j}  \end{array} \right]_{i,j\in I};  a_{i,j}\in A {\hbox{ and\, $a_{i,j}=0$ \,for every } }i>j }.$$

He showed that $\A$ is pseudo-contractible (amenable) if and only if $I$ is singleton and $\mathcal{A}$ is pseudo-contractible (amenable), respectively. He also studied the notions of pseudo-amenability and approximate biprojectivity of $UP(I,\mathcal{A})$. 

In this paper, we investigate the notion of Connes amenability for a class of $I\times{I}$-upper triangular matrix $UP(I,\mathcal{A})$, where $\mathcal{A}$ is a dual Banach algebra and $I$ is a totally ordered set. For this purpose, first in section 2 we study the duality of $\A$ by considering the isometric-isomorphism $\A\cong\ljs$ as Banach spaces, where $J$ is a subset of $I\times{I}$ and for every $\alpha\in{J}$, $\mathcal{A_{\alpha}}=\mathcal{A}$. In section 3 by using the fact that every Connes amenable Banach algebra is $\phi$-Connes amenable, we obtain a new characterization of  $\phi$-Connes amenability through the existence of a bounded net with a certain condition, where $\phi$ is a non-zero $wk^\ast$-continuous character. By applying latter characterization, we show that $\A$ is Connes amenable if and only if $\mathcal{A}$ is Connes amenable and $I$ is singleton. Finally in section 4 we provide some examples of $\phi$-Connes amenable dual Banach algebras, which are not Connes amenable.
\section{The duality of $\A$}
 Let $\mathcal{A}$ be a dual Banach algebra and let $I$ be a totally ordered set. Then the set of all $I\times{I}$-upper triangular matrices
   with the usual matrix operations and the norm $\parallel[a_{i,j}]_{{i,j}\in{I}}\parallel=\sum\limits _{i,j\in{I}}\parallel{a}_{i,j}\parallel<\infty$, becomes a Banach algebra. 
Before we study the duality of $\A$, we state the following Lemma:
\begin{lemma}\label{l1}
	If $\mathcal{A}$ is a dual Banach algebra with the predual $\mathcal{A}_{\ast}$ and $I$ is a non-empty set, then\\ $(\cs)^{\ast}\cong\ls$, where for every $i\in{I}$, $\mathcal{A}_{i}=\mathcal{A}$ and $\mathcal{A}_{i_\ast}=\mathcal{A}_{\ast}$.
\end{lemma}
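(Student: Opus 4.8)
The plan is to exhibit an explicit isometric isomorphism $\Psi:\ls\to(\cs)^{\ast}$ realizing the standard $c_{0}$-versus-$\ell^{1}$ duality, adapted to the Banach-space-valued setting, and to rely crucially on the defining property of a dual Banach algebra: the canonical pairing identifies $(\mathcal{A}_{\ast})^{\ast}$ with $\mathcal{A}$ on each coordinate. Concretely, for $(a_{i})_{i\in I}\in\ls$ and $(f_{i})_{i\in I}\in\cs$ I would set
$$\langle\Psi((a_{i})_{i}),(f_{i})_{i}\rangle=\sum_{i\in I}\langle a_{i},f_{i}\rangle,$$
where each inner pairing is the canonical one between $\mathcal{A}=(\mathcal{A}_{\ast})^{\ast}$ and $\mathcal{A}_{\ast}$.

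First I would check that $\Psi$ is well defined and contractive. Since $\abs{\langle a_{i},f_{i}\rangle}\le\norm{a_{i}}\,\norm{f_{i}}\le\norm{a_{i}}\sup_{j}\norm{f_{j}}$, the series is absolutely convergent and $\abs{\langle\Psi((a_{i})_{i}),(f_{i})_{i}\rangle}\le\norm{(f_{i})_{i}}\sum_{i}\norm{a_{i}}$, so $\Psi((a_{i})_{i})\in(\cs)^{\ast}$ with $\norm{\Psi((a_{i})_{i})}\le\norm{(a_{i})_{i}}$. Linearity is immediate.

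Next I would prove that $\Psi$ is an isometry. Given $(a_{i})_{i}$ and $\varepsilon>0$, since $\sum_{i}\norm{a_{i}}<\infty$ only countably many $a_{i}$ are nonzero, so I can pick a finite $F\subseteq I$ with $\sum_{i\notin F}\norm{a_{i}}<\varepsilon$; for each $i\in F$, by the very definition of the dual norm on $\mathcal{A}=(\mathcal{A}_{\ast})^{\ast}$, choose $f_{i}\in\mathcal{A}_{\ast}$ with $\norm{f_{i}}\le1$ and $\langle a_{i},f_{i}\rangle>\norm{a_{i}}-\varepsilon/\abs{F}$, and set $f_{i}=0$ otherwise. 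The resulting finitely supported family lies in $\cs$ with sup-norm at most $1$, and testing $\Psi((a_{i})_{i})$ against it yields a value exceeding $\sum_{i}\norm{a_{i}}-2\varepsilon$. Letting $\varepsilon\to0$ gives $\norm{\Psi((a_{i})_{i})}\ge\norm{(a_{i})_{i}}$.

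Finally I would establish surjectivity. Given $F\in(\cs)^{\ast}$, let $\iota_{i}:\mathcal{A}_{\ast}\to\cs$ be the $i$-th coordinate embedding and define $a_{i}\in(\mathcal{A}_{\ast})^{\ast}=\mathcal{A}$ by $\langle a_{i},f\rangle=\langle F,\iota_{i}(f)\rangle$. Testing over finite subsets $F_{0}\subseteq I$ and taking the supremum over norm-one preduals in each coordinate shows $\sum_{i\in F_{0}}\norm{a_{i}}\le\norm{F}$, whence $(a_{i})_{i}\in\ls$ with $\norm{(a_{i})_{i}}\le\norm{F}$. That $\Psi((a_{i})_{i})=F$ then follows on the dense subspace of finitely supported families by linearity, and extends by continuity. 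The main obstacle I anticipate is the bookkeeping for a possibly uncountable $I$: one must verify that $c_{0}$-summability (only finitely many coordinates exceed any $\varepsilon$) and $\ell^{1}$-summability interact correctly so that all the relevant series and suprema behave as in the scalar case. Once the finite-support density and the coordinatewise norm-attainment selections are in place, the argument reduces to the classical $(c_{0})^{\ast}\cong\ell^{1}$ computation.
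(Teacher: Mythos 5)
Your proposal is correct and follows essentially the same route as the paper: the same pairing $\sum_{i}\langle a_{i},f_{i}\rangle$, the same coordinate embeddings (the paper's $\delta(\alpha;\cdot)$) to recover the $\ell^{1}$-family from a functional, the same finite-subset/unimodular-scaling estimate for the $\ell^{1}$-bound, and the same density-of-finitely-supported-elements argument to identify the functional. The only cosmetic difference is that you prove the lower norm bound directly by near-norm-attaining choices in the predual, whereas the paper extracts it from the surjectivity estimate; both are the standard $(c_{0})^{\ast}\cong\ell^{1}$ computation.
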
\label{l2.1}
\begin{proof}
	Let $g=(g_{\alpha})_{\alpha\in{I}}\in{\ls}$. We define $\phi_{g}: \cs\longrightarrow \mathbb{C}$ by $\phi_{g}(f)=\sum\limits_{\alpha\in{I}}g_{\alpha}(f_{\alpha})$, where $f=(f_{\alpha})_{\alpha\in{I}}\in{\cs}$. We show that $\phi_{g}$ is bounded,
	\begin{equation}\label{e2.1}
	\begin{split}
	\mid\phi_{g}(f)\mid&\leq\sum\limits_{\alpha\in{I}}\mid{g}_{\alpha}(f_{\alpha})\mid\leq\sum\limits_{\alpha\in{I}}\parallel{f}_{\alpha}\parallel\parallel{g}_{\alpha}\parallel\\
	&\leq\parallel{f}\parallel_{\infty}\sum\limits_{\alpha\in{I}}\parallel{g}_{\alpha}\parallel\leq\parallel{f}\parallel_{\infty}\parallel{g}\parallel_{1},
	\end{split}
	\end{equation}
	So $\parallel\phi_{g}\parallel\leq\parallel{g}\parallel_{1}$. Now we define $T:\ls\longrightarrow(\cs)^{\ast}$ by $T(g)=\phi_{g}$ and we show that $T$ is isometric-isomorphism. It is clear that the map $T$ is linear. Let $\phi\in{(\cs)^{\ast}}$, we show that there exists $g\in{\ls}$ such that $\phi_{g}=\phi$. Fixed $\alpha_0\in{I}$ and $\lambda_0\in{\mathcal{A}_{\ast}}$, we define  $\delta(\alpha_0;\lambda_0)\colon{I}\longrightarrow\mathcal{A}_{\ast}$ by $\delta(\alpha_0;\lambda_0)(\alpha)=\lambda_0$ whenever $\alpha=\alpha_0$ and $\delta(\alpha_0;\lambda_0)(\alpha)=0$ for every $\alpha\neq\alpha_0$. It is obvious that $\delta(\alpha_0;\lambda_0)\in \cs$ and $\parallel\delta(\alpha_0;\lambda_0)\parallel_{\infty}=\parallel\lambda_0\parallel$. Now we consider $g=(g_{\alpha})_{\alpha\in{I}}$, where $g_{\alpha}(\lambda)=\phi(\delta(\alpha;\lambda))$ for every  $\alpha\in{I}$ and $\lambda\in{\mathcal{A}_{\ast}}$. It is easy to see that $\norm{g_\alpha}\leq \norm{\phi}$ for every $\alpha\in{I}$, so $g_{\alpha}$ is a continuous linear functional on $\mathcal{A}_{\ast}$.
	
	Consider $(f_{\alpha})_{\alpha\in{I}}\in{\cs}$, we define $\eta_{\alpha}:=\dfrac{\overline{\phi(\delta(\alpha;f_{\alpha}))}}{\mid\phi(\delta(\alpha;f_{\alpha}))\mid}$, whenever $\phi(\delta(\alpha;f_{\alpha}))\neq0$ otherwise we define $\eta_{\alpha}=1$. So for every $\alpha \in I$, $\abs{\eta_{\alpha}}=1$. Let $\mathcal{F}$ be the family of all  finite subsets of $I$. Then for every $f=(f_{\alpha})_{\alpha\in{I}}\in{\cs}$ we have
	 $\norm{f}_\infty=\sup\limits_{F\in\mathcal{F}}\sum\limits_{\alpha\in{F}}(\norm{f_{\alpha}}))$. So for every $f$ in  the unit ball of $\cs$, we have
	\begin{equation*}
	\begin{split}
	\sum\limits_{\alpha\in{F}}\mid{g}_{\alpha}(f_{\alpha})\mid&=\sum\limits_{\alpha\in{F}}\mid\phi(\delta(\alpha;f_{\alpha}))\mid=\sum\limits_{\alpha\in{F}}\eta_{\alpha}\phi(\delta(\alpha;f_{\alpha}))\\&=\sum\limits_{\alpha\in{F}}\phi(\eta_{\alpha}\delta(\alpha;f_{\alpha}))=\sum\limits_{\alpha\in{F}}\phi(\delta(\alpha;\eta_{\alpha}f_{\alpha}))\\&=\phi(\sum\limits_{\alpha\in{F}}\delta(\alpha;\eta_{\alpha}f_{\alpha}))\leq\parallel\phi\parallel\parallel{f}\parallel_{\infty}\leq\norm{\phi}.
	\end{split}
	\end{equation*}
So for every ${F\in\mathcal{F}}$ we have $\sum\limits_{\alpha\in F}\parallel{g}_{\alpha}\parallel\leq\parallel\phi\parallel$ which implies that
	\begin{equation}\label{e2.2}
	\sum\limits_{\alpha\in{I}}\parallel{g}_{\alpha}\parallel\leq\parallel\phi\parallel.
	\end{equation}
	Thus $g=(g_{\alpha})_{\alpha\in{I}}\in{\ls}$. Since $\ccs$ is dense in $\cs$, first we show that $\phi_{g}=\phi$ on $\ccs$. Let $f=(f_{\alpha})_{\alpha\in{I}}\in{\ccs}$, so there exists a finite subset $F$ of $I$ such that for any $\alpha\in{I-{F}}$, $f_{\alpha}=0$. We have 
	\begin{equation}\label{e2.3}
	\begin{split}
	\phi_{g}(f)=\sum\limits_{\alpha\in{I}}g_{\alpha}(f_{\alpha})=\sum\limits_{\alpha\in{F}}\phi(\delta(\alpha;f_{\alpha}))=\phi(\sum\limits_{\alpha\in{F}}\delta(\alpha;f_{\alpha}))=\phi(f).
	\end{split}
	\end{equation}
	Now suppose that $f\in{\cs}$, so there exists a net $f_{\alpha}\in{\ccs}$ such that $f_{\alpha}\xrightarrow{\parallel.\parallel_{\infty}}f$. By (\ref{e2.3}) we have 
	\begin{equation*}
	\phi(f)=\phi(\lim\limits_{\alpha}f_{\alpha})=\lim\limits_{\alpha}\phi(f_{\alpha})=\lim\limits_{\alpha}\phi_{g}(f_{\alpha})=\phi_{g}(\lim\limits_{\alpha}f_{\alpha})=\phi_{g}(f).
	\end{equation*}
	Hence $\phi_{g}=\phi$. Now by (\ref{e2.2}) and (\ref{e2.1}) we have 
	$$\parallel{g}\parallel_{1}\leq\parallel\phi_{g}\parallel=\parallel{T(g)}\parallel\leq\parallel{g}\parallel_{1}.$$
	Therefore the map $T$ is isometry and by applying the open mapping theorem, we have
	$$\ls\cong(\cs)^{\ast}.$$
\end{proof}
 \begin{Remark}\label{r1}
	Let $\mathcal{A}$ be a dual Banach algebra and let $I$ be a totally ordered set. Consider the subset $J$ of $I\times{I}$ defined by $J=\{(i,j)\mid{i,j}\in{I},i\leq{j}\}$. So we have an isometric-isomorphism $\A\cong\ljs$ as Banach spaces, where for every $\alpha=(i,j)\in{J}$, $\mathcal{A}_{\alpha}=\mathcal{A}$. 
\end{Remark}

\begin{Theorem}\label{t2.1}
	If $\mathcal{A}$ is a dual Banach algebra with the predual $\mathcal{A}_{\ast}$ and $I$ is a totally ordered set, then $UP(I,\mathcal{A})$ is a dual Banach algebra.
\end{Theorem}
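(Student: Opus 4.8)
The plan is to exhibit an explicit predual for $\A$ and then verify that its multiplication is separately $wk^\ast$-continuous with respect to that predual. By Remark \ref{r1} we have the isometric isomorphism $\A\cong\ljs$ of Banach spaces, and applying Lemma \ref{l1} with the index set $J$ in place of $I$ gives $\ljs\cong(\cjs)^\ast$. Combining these, we may identify $\A\cong(\cjs)^\ast$, so $\A$ is a dual Banach space with candidate predual $E_\ast:=\cjs$. It then remains to show that $E_\ast$ can be taken as a \emph{closed $\A$-submodule} of $\A^\ast$; by the definition of a dual Banach algebra recalled in Section 1 (equivalently, by the adjoint characterisation of $wk^\ast$-continuous maps), this is exactly what is required.

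To this end I would realise both $\A$ and its dual in matrix form. Writing an element of $\A$ as an upper triangular matrix $b=[b_{i,j}]_{i\le j}$ with $\sum_{i\le j}\norm{b_{i,j}}<\infty$, and an element of $E_\ast$ as a matrix $F=[f_{i,j}]_{i\le j}$ whose entries $f_{i,j}\in\mathcal{A}_\ast$ satisfy the $c_0$-condition, the duality pairing reads $\langle b,F\rangle=\sum_{i\le j}f_{i,j}(b_{i,j})$. Computing the natural bimodule actions of $\A$ on $\A^\ast$ in these coordinates, a short calculation using only matrix multiplication gives, for the left action,
\begin{equation*}
(b\cdot F)_{i,k}=\sum_{j\ge k}b_{k,j}\cdot f_{i,j},
\end{equation*}
and a symmetric expression for the right action; here $b_{k,j}\cdot f_{i,j}$ denotes the module action of $\mathcal{A}$ on $\mathcal{A}_\ast$. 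Since $\mathcal{A}$ is dual, $\mathcal{A}_\ast$ is a submodule of $\mathcal{A}^\ast$, so each term $b_{k,j}\cdot f_{i,j}$ lies in $\mathcal{A}_\ast$, and the series converges there because $\sum_{j}\norm{b_{k,j}}<\infty$ while $(\norm{f_{i,j}})$ is bounded.

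The main obstacle is to check that $b\cdot F$ (and likewise $F\cdot b$) actually remains in $E_\ast=\cjs$, that is, that the family $(\norm{(b\cdot F)_{i,k}})_{(i,k)\in J}$ again satisfies the $c_0$-condition; only then is $E_\ast$ invariant under the actions and hence a submodule. I expect this to follow from a two-part estimate on $\norm{(b\cdot F)_{i,k}}\le\sum_{j\ge k}\norm{b_{k,j}}\norm{f_{i,j}}$: given $\varepsilon>0$, the $\ell^1$-summability of the entries of $b$ allows me to discard all but a finite set of indices $(k,j)$ at a uniform cost $<\varepsilon/2$, while on the remaining finite set the $c_0$-decay of the entries of $F$ leaves only finitely many $(i,k)$ for which the contribution is $\ge\varepsilon/2$. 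Once invariance is established, $E_\ast$ is a closed $\A$-submodule of $\A^\ast$ with $\A\cong(E_\ast)^\ast$, so left and right multiplication on $\A$ are the adjoints of the corresponding actions on $E_\ast$ and are therefore $wk^\ast$-$wk^\ast$ continuous. This shows that $\A$ is a dual Banach algebra, completing the proof.
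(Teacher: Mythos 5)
Your proposal is correct and follows essentially the same route as the paper: identify the candidate predual as $\cjs$ via Lemma \ref{l1} and Remark \ref{r1}, compute the dual module action entrywise to get $(b\cdot F)_{i,k}=\sum_{j}b_{k,j}\cdot f_{i,j}$, and verify that this preserves the $c_{0}$-condition using the $\ell^{1}$-summability of the matrix entries together with the decay of $F$. The only cosmetic difference is that your two-part $\varepsilon/2$ estimate phrases "vanishing at infinity" via finite exceptional sets rather than the paper's order-based tail bound, and you flag the symmetric right action explicitly; neither changes the substance of the argument.
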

\begin{proof}
According to Remark \ref{r1} and by Lemma \ref{l1}, it is sufficient to show that $\cjs$ is a closed $\A$-submodule of $\linf$, where for every $\alpha\in{J}$, $\mathcal{A}_{\alpha}^{\ast}=\mathcal{A}^{\ast}$ and $\mathcal{A}_{\alpha_\ast}=\mathcal{A}_{\ast}$.  First we show that $\cjs$ is a closed subspace of $\linf$. Let $x_n=(\xi_\alpha^n)_{\alpha\in{J}}$ be in $\cjs$ and suppose that $x_n\longrightarrow{x}=(\xi_\alpha)_{\alpha\in{J}}$ in $\linf$. Fixed $\varepsilon>0$. For sufficiently large $N$,
\begin{equation*}
\underset{\alpha\in{J}}{\sup}\parallel\xi_\alpha^N-\xi_\alpha\parallel<\frac{\varepsilon}{2}.
\end{equation*}
Since $(\xi_\alpha^N)$ vanishes at infinity, for sufficiently large $\alpha$, we have $\parallel\xi_\alpha^N\parallel<\dfrac{\varepsilon}{2}$.
Then
\begin{equation*}
\begin{split}
\parallel{\xi}_\alpha\parallel=\parallel{\xi}_\alpha-\xi_\alpha^N+\xi_\alpha^N\parallel&\leq\parallel{\xi}_\alpha-\xi_\alpha^N\parallel+\parallel\xi_\alpha^N\parallel\\&<\frac{\varepsilon}{2}+\frac{\varepsilon}{2}=\varepsilon,
\end{split}
\end{equation*}
  for sufficiently large $\alpha$. It follows that ${x}=(\xi_\alpha)_{\alpha\in{J}}\in\cjs$. Now we show that $\cjs$ is an $\A$-module. Suppose that $X=(a_{\alpha})_{\alpha\in{J}}$ and $\Lambda=(f_{\alpha})_{\alpha\in{J}}$ are arbitrary elements in $\ljs$ and $\cjs$, respectively. For every $Y=(b_{\alpha})_{\alpha\in{J}}$ in $\ljs$, we have ${X}\cdot{\Lambda}(Y)=\Lambda(YX)$. Then by Lemma \ref{l1} and (\ref{e2.1}) we have
  \begin{equation}\label{e2.4}
  {X}\cdot{\Lambda}(Y)=\sum\limits_{\alpha\in{J}}f_{\alpha}(c_{\alpha}),
  \end{equation}
where $(c_{\alpha})_{\alpha\in{J}}=[c_{i,j}]_{{i,j}\in{I}}=YX$ with respect to matrix multiplication in $\A$.  Since $\linf$ is an $\A$-bimodule with dual actions, then we have $X\cdot{\Lambda}\in{\linf}$. We claim that $X\cdot{\Lambda}$ belongs to $\cjs$, that is, vanishes at infinity. By (\ref{e2.4}) we have 
 \begin{equation}\label{e2.5}
  \begin{split}
  {X}\cdot{\Lambda}(Y)&=\sum\limits_{i,j\in{I}}\langle{c}_{i,j},f_{i,j}\rangle=\sum\limits_{i,j\in{I}}\langle\sum\limits_{k\in{I}}b_{i,k}a_{k,j},f_{i,j}\rangle\\&=\sum\limits_{i,j\in{I}}\sum\limits_{k\in{I}}\langle{b}_{i,k}a_{k,j},f_{i,j}\rangle=\sum\limits_{i,j\in{I}}\sum\limits_{k\in{I}}\langle{b}_{i,k},a_{k,j}\cdot{f}_{i,j}\rangle,
  \end{split}
  \end{equation}
  where $i,j\in{I}$ and $i\leq{j}$. Since $\parallel{Y}\parallel_{1}<\infty$, one can see that $\underset{i,j\in{I}}{\sup}\parallel{b}_{i,j}\parallel<\infty$. Let $M=\underset{i,j\in{I}}{\sup}\parallel{b}_{i,j}\parallel$. Take a finite  subset $F$ of $I$. We have 
  \begin{equation*}
  \begin{split}
  \sum\limits_{i,j\in{F}}\sum\limits_{k\in{F}}\mid\langle{b}_{i,k},a_{k,j}\cdot{f}_{i,j}\rangle\mid&\leq\sum\limits_{i,j\in{F}}\sum\limits_{k\in{F}}\parallel{a_{k,j}}\parallel\parallel{f}_{i,j}\parallel\parallel{b}_{i,k}\parallel\\ &\leq\parallel\Lambda\parallel_{\infty}{M}\sum\limits_{i,j\in{F}}\sum\limits_{k\in{F}}\parallel{a_{k,j}}\parallel\\
  &\leq\parallel\Lambda\parallel_{\infty}{M}\sum\limits_{i,j\in{F}}\parallel{a_{i,j}}\parallel\leq\parallel\Lambda\parallel_{\infty}{M}\parallel{X}\parallel_{1}.
  \end{split}
  \end{equation*}
So $\sum\limits_{i,j\in{I}}\sum\limits_{k\in{I}}\mid\langle{b}_{i,k},a_{k,j}\cdot{f}_{i,j}\rangle\mid<\infty$.
By rearrangement series in (\ref{e2.5}), we have
\begin{equation}\label{e2.6}
{X}\cdot{\Lambda}(Y)=\sum\limits_{i,j\in{I}}\sum\limits_{k\in{I}}\langle{b}_{i,j},a_{j,k}\cdot{f}_{i,k}\rangle=\sum\limits_{i,j\in{I}}\langle{b}_{i,j},\sum\limits_{k\in{I}}a_{j,k}\cdot{f}_{i,k}\rangle.
\end{equation} 
Suppose that $X\cdot{\Lambda}=(g_{\alpha})_{\alpha\in{J}}$. By (\ref{e2.6}) for every $\alpha=(i,j)\in{J}$, we have $g_{\alpha}=g_{i,j}=\sum\limits_{k\in{I}}{a}_{j,k}\cdot{f}_{i,k}$. Fixed $\varepsilon>0$. Since $\Lambda$ vanishes at infinity, there is a $\alpha_0=(i_0,j_0)\in{J}$ such that for every $\alpha\geq\alpha_0$ we have $\parallel{f}_{\alpha}\parallel\leq\frac{\varepsilon}{\parallel{X}\parallel}$. Now for every $(i,j)\geq(i_0,j_0)$ in $J$ with product ordering, we have
 \begin{equation*}
 \parallel{g}_{i,j}\parallel\leq\sum\limits_{k\in{I}}\parallel{a}_{j,k}\parallel\parallel{f}_{i,k}\parallel\leq\frac{\varepsilon}{\parallel{X}\parallel}\sum\limits_{k\in{I}}\parallel{a}_{j,k}\parallel\leq\frac{\varepsilon}{\parallel{X}\parallel}\parallel{X}\parallel\leq\varepsilon,
 \end{equation*}
  note that in $\A$, if $j>k$, then ${a}_{j,k}=0$. Therefore $X\cdot{\Lambda}$ vanishes at infinity. 
\end{proof}
\section{Connes amenability of $\A$}
Let $\mathcal{A}$ be a dual Banach algebra and let $I$ be a totally ordered set.
  In this section  we characterize the notion of Connes amenability of $\A$. Throughout
this section 
 the set of all homomorphism from $\mathcal{A}$ into $\mathbb{C}$ is denoted by $\Delta(\mathcal{A})$ and the set of all $wk^{*}$-continuous homomorphism from $\mathcal{A}$ into $\mathbb{C}$ is denoted by $\Delta_{{wk}^{\ast}}{(\mathcal{A})}$.
For every $\varphi\in{\Delta(\mathcal{A})}$, the notion of $\varphi$-amenability for a Banach algebra was introduced by Kaniuth, Lau and Pym \cite{kan:2008}. Indeed  $\mathcal{A}$ is $\varphi$-amenable if there exists a bounded linear functional $m$ on  ${\mathcal{A}}^{\ast}$ satisfying $m(\varphi)=1$ and $m(f\cdot{a})=\varphi(a)m(f)$ for every $a\in{\mathcal{A}}$ and $f\in{\mathcal{A}^{\ast}}$. They characterized $\varphi$-amenability in different ways: 
\begin{itemize}
	\item
	Through vanishing of the cohomology groups $\mathcal{H}^{1}(\mathcal{A},X^{*})$ for certain Banach $\mathcal{A}$-bimodule $X$ \cite[Theorem 1.1]{kan:2008}.
	\item
	Through the existence of a bounded net $(u_{\alpha})$ in $\mathcal{A}$ such that$\parallel{au_{\alpha}-\varphi(a)u_{\alpha}\parallel}\longrightarrow0$ for all $a\in{\mathcal{A}}$ and $\varphi(u_{\alpha})=1$ for all $\alpha$ \cite[Theorem 1.4]{kan:2008}.
\end{itemize}
By \cite[Theorem 1.1]{kan:2008}, we conclude that every amenable Banach algebra is $\varphi$-amenable for any $\varphi\in{\Delta(\mathcal{A})}$.\\

In the sense of Connes amenability for a dual Banach algebra $\mathcal{A}$, the notion of $\varphi$-Connes amenability for $\varphi\in\Delta_{{wk}^{\ast}}{(\mathcal{A})}$, was introduced by Mahmoodi  and some characterizations were given \cite{Mahmmody:2014}. We say that $\mathcal{A}$ is $\varphi$-Connes amenable if there exists a bounded linear functional $m$ on  $\sigma{wc}({\mathcal{A}}^{\ast})$ satisfying $m(\varphi)=1$ and $m(f\cdot{a})=\varphi(a)m(f)$ for any $a\in{\mathcal{A}}$ and $f\in{\sigma{wc}({\mathcal{A}}^{\ast})}$. The concept of $\varphi$-Connes amenability was characterized through vanishing of the cohomology groups $\mathcal{H}^{1}_{wk^{*}}(\mathcal{A},E)$ for certain normal dual Banach $\mathcal{A}$-bimodule $E$.
By \cite[Theorem 2.2]{Mahmmody:2014}, we conclude that every Connes amenable Banach algebra is $\varphi$-Connes amenable for any $\varphi\in\Delta_{{wk}^{\ast}}{(\mathcal{A})}$. If $\varphi\in{\Delta_{{wk}^{\ast}}{(\mathcal{A})}}$, then one may show that, $\varphi\otimes\varphi\in{\sigma\omega{c}({\mathcal{A}}\hat{\otimes}{\mathcal{A}})^{\ast}}$, where $\varphi\otimes\varphi(a\otimes{b})=\varphi(a)\varphi(b)$ for any ${a,b}\in{\mathcal{A}}$. 

Now by inspiration of methods that used in \cite[Proposition 3.2]{Mahmmody:2016}, we characterize the notion of $\varphi$-Connes amenability through the existence of a bounded net in $\mathcal{A}\hat{\otimes}\mathcal{A}$ with certain properties.
\begin{Proposition}\label{net}
	Let $\mathcal{A}$ be a dual Banach algebra and $\varphi\in{\Delta_{{wk}^{\ast}}{(\mathcal{A})}}$. Then $\mathcal{A}$ is $\varphi$-Connes amenable if and only if  there exists a bounded net $\set{u_{\alpha}}$ in $\mathcal{A}\hat{\otimes}\mathcal{A}$ such that
	\begin{enumerate}
		\item [(i)] $a\cdot{u_{\alpha}}-\varphi(a){u_{\alpha}}{\overset{wk^\ast}{\longrightarrow}}0\qquad$in$\quad(\sigma{wc}({\mathcal{A}}\hat{\otimes}{\mathcal{A}})^{\ast})^{\ast}$.
		\item [(ii)] 
		$\langle{u_{\alpha},\varphi\otimes\varphi}\rangle\longrightarrow{1}$. 
	\end{enumerate}
\end{Proposition}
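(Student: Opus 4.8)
The plan is to reformulate $\varphi$-Connes amenability in terms of a single diagonal-type element and then move between that element and the net by weak-$\ast$ compactness. Put $N=(\sigma{wc}(\mathcal{A}\hat{\otimes}\mathcal{A})^{\ast})^{\ast}$ and let $j:\mathcal{A}\hat{\otimes}\mathcal{A}\to N$ be the canonical map $\langle j(u),F\rangle=\langle F,u\rangle$ $(F\in\sigma{wc}(\mathcal{A}\hat{\otimes}\mathcal{A})^{\ast})$; since $\sigma{wc}(\mathcal{A}\hat{\otimes}\mathcal{A})^{\ast}$ is a closed submodule of $(\mathcal{A}\hat{\otimes}\mathcal{A})^{\ast}$, this $j$ is a bounded $\mathcal{A}$-bimodule homomorphism and $N$ is a normal dual $\mathcal{A}$-bimodule by \cite[Proposition 4.4]{Runde:2004}. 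The first step is to prove that $\mathcal{A}$ is $\varphi$-Connes amenable if and only if there is an $M\in N$ with
\begin{equation*}
a\cdot M=\varphi(a)M\ \ (a\in\mathcal{A})\qquad\text{and}\qquad\langle M,\varphi\otimes\varphi\rangle=1 .
\end{equation*}
Once this is available, passing to the net is routine in both directions. Lifting $M$ through the isometric inclusion $\sigma{wc}(\mathcal{A}\hat{\otimes}\mathcal{A})^{\ast}\hookrightarrow(\mathcal{A}\hat{\otimes}\mathcal{A})^{\ast}$ and applying Goldstine's theorem gives a bounded net $\set{u_{\alpha}}$ in $\mathcal{A}\hat{\otimes}\mathcal{A}$ with $j(u_{\alpha})\xrightarrow{wk^{\ast}}M$; pairing with $\varphi\otimes\varphi$ yields (ii), and for every $F\in\sigma{wc}(\mathcal{A}\hat{\otimes}\mathcal{A})^{\ast}$ the identity $\langle a\cdot u_{\alpha}-\varphi(a)u_{\alpha},F\rangle=\langle u_{\alpha},F\cdot a-\varphi(a)F\rangle$, together with $F\cdot a-\varphi(a)F\in\sigma{wc}(\mathcal{A}\hat{\otimes}\mathcal{A})^{\ast}$, yields (i). Conversely, a bounded net with (i) and (ii) has a $wk^{\ast}$-cluster point $M\in N$, and these same identities show $M$ satisfies the two displayed conditions.

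For the implication from the diagonal to $\varphi$-Connes amenability I would use the map $\pi$. As noted before the statement, $\pi^{\ast}:\mathcal{A}^{\ast}\to(\mathcal{A}\hat{\otimes}\mathcal{A})^{\ast}$ is an $\mathcal{A}$-bimodule homomorphism carrying $\sigma{wc}(\mathcal{A}^{\ast})$ into $\sigma{wc}(\mathcal{A}\hat{\otimes}\mathcal{A})^{\ast}$, and $\pi^{\ast}(\varphi)=\varphi\otimes\varphi$. Defining $m(f)=\langle M,\pi^{\ast}(f)\rangle$ for $f\in\sigma{wc}(\mathcal{A}^{\ast})$ gives $m(\varphi)=\langle M,\varphi\otimes\varphi\rangle=1$, and $m(f\cdot a)=\langle M,\pi^{\ast}(f)\cdot a\rangle=\langle a\cdot M,\pi^{\ast}(f)\rangle=\varphi(a)m(f)$, so $m$ is a $\varphi$-mean and $\mathcal{A}$ is $\varphi$-Connes amenable.

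The genuinely delicate part is the converse, constructing $M$ from a $\varphi$-mean $m$, and this is where I expect the main obstacle. A direct transport fails: contracting an $F\in\sigma{wc}(\mathcal{A}\hat{\otimes}\mathcal{A})^{\ast}$ in one tensor leg produces a functional on $\mathcal{A}$ satisfying only the one-sided analogue of the $\sigma{wc}$ condition, so it cannot simply be fed to $m$. The plan is to average a derivation instead. Fix $b_{0}$ with $\varphi(b_{0})=1$, set $M_{0}=j(b_{0}\otimes b_{0})$ (so $\langle M_{0},\varphi\otimes\varphi\rangle=1$) and $D(a)=a\cdot M_{0}-\varphi(a)M_{0}$. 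A short computation gives $D(ab)=a\cdot D(b)+\varphi(b)D(a)$, so $D$ is a derivation into $N$ equipped with the right action twisted through $\varphi$; normality of $N$ makes $D$ weak-$\ast$ continuous, and since $\langle D(a),\varphi\otimes\varphi\rangle=0$ it takes values in $N_{0}=\set{L\in N:\langle L,\varphi\otimes\varphi\rangle=0}$. Now the crucial point: for each $G\in\sigma{wc}(\mathcal{A}\hat{\otimes}\mathcal{A})^{\ast}$ the function $a\mapsto\langle D(a),G\rangle$ is $wk^{\ast}$-continuous, hence lies in $\mathcal{A}_{\ast}\subseteq\sigma{wc}(\mathcal{A}^{\ast})$, so $\langle\Psi,G\rangle:=m(a\mapsto\langle D(a),G\rangle)$ defines $\Psi\in N$. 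Using the derivation identity together with $m(f\cdot a)=\varphi(a)m(f)$ and $m(\varphi)=1$, one checks $D(a)=-(a\cdot\Psi-\varphi(a)\Psi)$; moreover $\langle\Psi,\varphi\otimes\varphi\rangle=m(a\mapsto\langle D(a),\varphi\otimes\varphi\rangle)=0$, so $\Psi\in N_{0}$. Setting $M=M_{0}+\Psi$ then gives $a\cdot M=\varphi(a)M$, while the normalisation $\langle M,\varphi\otimes\varphi\rangle=1$ survives precisely because $\Psi\in N_{0}$. The main difficulty, and the place needing the most care, is thus this transport of the mean: the twist of $D$ into $N_{0}$ and the fact that $a\mapsto\langle D(a),G\rangle$ is automatically normal (so that $m$ applies) are exactly what make the averaging produce a correctly normalised $M$, bypassing the failure of the naive leg-contraction.
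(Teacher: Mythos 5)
Your argument is correct, and for the half of the proposition that the paper actually argues in detail --- the passage between a single ``diagonal'' element $M\in(\sigma{wc}(\mathcal{A}\hat{\otimes}\mathcal{A})^{\ast})^{\ast}$ with $a\cdot M=\varphi(a)M$, $\langle M,\varphi\otimes\varphi\rangle=1$ and the bounded net $(u_{\alpha})$ --- you do exactly what the paper does: Goldstine plus the $wk^\ast$-density of the image of $\mathcal{A}\hat{\otimes}\mathcal{A}$ in one direction, Banach--Alaoglu and a cluster point in the other. The real divergence is in the step you call the first step: the equivalence of $\varphi$-Connes amenability with the existence of such an $M$ is precisely \cite[Theorem 3.2]{Mahmmody:2014}, which the paper simply cites twice and does not prove. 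You instead prove it: the easy direction by pushing $M$ forward through $\pi^{\ast}$ (using that bounded module maps preserve $\sigma{wc}$ and that $\pi^{\ast}(\varphi)=\varphi\otimes\varphi$), and the harder direction by the twisted-derivation averaging $D(a)=a\cdot M_{0}-\varphi(a)M_{0}$, $\langle\Psi,G\rangle=m(a\mapsto\langle D(a),G\rangle)$, $M=M_{0}+\Psi$. I checked the computations there ($D(ab)=a\cdot D(b)+\varphi(b)D(a)$, the membership of $a\mapsto\langle D(a),G\rangle$ in $\mathcal{A}_{\ast}\subseteq\sigma{wc}(\mathcal{A}^{\ast})$, $a\cdot\Psi-\varphi(a)\Psi=-D(a)$ via $m(f\cdot a)=\varphi(a)m(f)$, and $\langle\Psi,\varphi\otimes\varphi\rangle=0$ because $(\varphi\otimes\varphi)\cdot a=\varphi(a)\,\varphi\otimes\varphi$) and they all go through. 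So your proof is self-contained where the paper's is not; the cost is length, the gain is that you do not need to import Mahmoodi's theorem, and your identification of the normalisation $\Psi\in N_{0}$ as the crux is exactly right --- that is the point that makes the averaged element correctly paired with $\varphi\otimes\varphi$.
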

\begin{proof}
Let $\mathcal{A}$ be a	$\varphi$-Connes amenable. Then by \cite[Theorem 3.2]{Mahmmody:2014}, there exists an element $M$ in ${(\sigma{wc}({\mathcal{A}}\hat{\otimes}{\mathcal{A}})^{\ast})^{\ast}}$such that for any $a\in{\mathcal{A}}$, $a\cdot{M}=\varphi(a)M$ and $\langle{\varphi\otimes\varphi},M\rangle=1$. Since $\sigma{wc}({\mathcal{A}}\hat{\otimes}{\mathcal{A}})^{\ast}$ is a closed subspace of $(\mathcal{A}\hat{\otimes}\mathcal{A})^{\ast}$, we have a quotient map $q:(\mathcal{A}\hat{\otimes}\mathcal{A})^{\ast\ast}\longrightarrow(\sigma{wc}({\mathcal{A}}\hat{\otimes}{\mathcal{A}})^{\ast})^{\ast}$. Composing the canonical inclusion
	map $\mathcal{A}\hat{\otimes}\mathcal{A}\hookrightarrow(\mathcal{A}\hat{\otimes}\mathcal{A})^{\ast\ast}$ with $q$, we obtain a continuous $\mathcal{A}$-bimodule map $\tau:\mathcal{A}\hat{\otimes}\mathcal{A}\longrightarrow(\sigma{wc}({\mathcal{A}}\hat{\otimes}{\mathcal{A}})^{\ast})^{\ast}$ which has a $wk^{*}$-dense range. So there exists a net $(u_{\alpha})_{\alpha\in{I}}$ in $(\mathcal{A}\hat{\otimes}\mathcal{A})$ such that
	\begin{equation}\label{e3.1}
	M=wk^\ast\mbox{-}\lim_{\alpha}\tau(u_{\alpha})=wk^\ast\mbox{-}\lim_{\alpha}(\hat{u}_{\alpha})\vert_{\sigma{wc}({\mathcal{A}}\hat{\otimes}{\mathcal{A}})^{\ast}}.
	\end{equation}
By Goldstein's theorem, the net $(u_{\alpha})_{\alpha\in{I}}$ can be chosen to be a bounded net. We know that for any $T\in{{\sigma{wc}({\mathcal{A}}\hat{\otimes}{\mathcal{A}})^{\ast}}}$  and for any ${a}\in{\mathcal{A}}$,
\begin{equation*}
{T}\cdot{a}-{\varphi}(a)T\in{{\sigma{wc}({\mathcal{A}}\hat{\otimes}{\mathcal{A}})^{\ast}}}.
\end{equation*}
So
\begin{equation*}
\langle{T}\cdot{a}-{\varphi}(a)T,\hat{u}_{\alpha}\rangle\longrightarrow\langle{T}\cdot{a}-{\varphi}(a)T,M\rangle.
\end{equation*}
Thus we have
\begin{equation*}
{\langle{T},a\cdot{\hat{u}}_{\alpha}\rangle-\langle{T},\varphi(a)\hat{u}_{\alpha}\rangle}\longrightarrow{\langle{T},a\cdot{M}\rangle-\langle{T},\varphi(a)M\rangle}.
\end{equation*}
This equation is equivalent with
\begin{equation}\label{e3.2}
\langle{T},a\cdot\hat{u}_{\alpha}-\varphi(a)\hat{u}_{\alpha}\rangle\longrightarrow\langle{T},a\cdot{M}-\varphi(a)M\rangle
=0.
\end{equation}
Therefore\\

\begin{center}
$a\cdot{u_{\alpha}}-\varphi(a){u_{\alpha}}{\overset{wk^\ast}{\longrightarrow}}0\qquad${in}$\quad(\sigma{wc}({\mathcal{A}}\hat{\otimes}{\mathcal{A}})^{\ast})^{\ast}$.\\
\end{center}
On the other hand, since $\varphi\otimes\varphi\in{\sigma{wc}({\mathcal{A}}\hat{\otimes}{\mathcal{A}})^{\ast}}$, by (\ref{e3.1})
\begin{equation*}
\langle\varphi\otimes\varphi,\hat{u}_{\alpha}\rangle\longrightarrow\langle\varphi\otimes\varphi,M\rangle=1,
\end{equation*}
that is,
\begin{equation*}
\langle{u_{\alpha},\varphi\otimes\varphi}\rangle\longrightarrow{1}.
\end{equation*}
Conversely, regrad $(u_{\alpha})$ as a bounded net in $(\sigma{wc}({\mathcal{A}}\hat{\otimes}{\mathcal{A}})^{\ast})^{\ast}$. By Banach-Alaoglu theorem the bounded net $(\hat{u}_{\alpha})\vert_{\sigma{wc}({\mathcal{A}}\hat{\otimes}{\mathcal{A}})^{\ast}}$ has a $wk^\ast$-limit point. Let $$M=wk^\ast\mbox{-}\lim_{\alpha}((\hat{u}_{\alpha})\vert_{\sigma{wc}({\mathcal{A}}\hat{\otimes}{\mathcal{A}})^{\ast}}).$$
So $M\in{(\sigma{wc}({\mathcal{A}}\hat{\otimes}{\mathcal{A}})^{\ast})^{\ast}}$. By the similar argument that we apply in (\ref{e3.2}), we have
$a\cdot{M}-\varphi(a)M=0$ and $\langle\varphi\otimes\varphi,M\rangle=1$ as required. Hence by \cite[Theorem 3.2]{Mahmmody:2014} $\mathcal{A}$ is $\varphi$-Connes amenable.
\end{proof}
Now we deduce the main result of this paper.
\begin{Theorem}
Let $I$ be a totally ordered set and let $\mathcal{A}$ be a unital dual Banach algebra with $\Delta_{{wk}^{\ast}}{(\mathcal{A})}\neq\varnothing$. Then  $UP(I,\mathcal{A})$ is Connes amenable if and only if $I$ is singleton and $\mathcal{A}$ is Connes amenable.
\end{Theorem}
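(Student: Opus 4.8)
The plan is to prove both directions, with the interesting content lying in the forward direction. For the easy direction, suppose $I$ is singleton and $\mathcal{A}$ is Connes amenable; then $UP(I,\mathcal{A})\cong\mathcal{A}$ as dual Banach algebras, so Connes amenability is immediate. The real work is the converse: assuming $UP(I,\mathcal{A})$ is Connes amenable, I must deduce that $\mathcal{A}$ is Connes amenable and that $I$ has a single element.

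First I would construct a $wk^\ast$-continuous character on $UP(I,\mathcal{A})$. Since $\mathcal{A}$ is unital with $\Delta_{{wk}^{\ast}}{(\mathcal{A})}\neq\varnothing$, fix $\varphi\in\Delta_{{wk}^{\ast}}{(\mathcal{A})}$ and a distinguished index; define $\Phi$ on a matrix $[a_{i,j}]$ by applying $\varphi$ to a single fixed diagonal entry (say the entry indexed by some fixed $i_0\in I$, sending $[a_{i,j}]\mapsto\varphi(a_{i_0,i_0})$). One checks this is multiplicative using the upper-triangular product formula, and that it is $wk^\ast$-continuous with respect to the predual identified in Theorem \ref{t2.1}. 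Thus $\Phi\in\Delta_{{wk}^{\ast}}{(UP(I,\mathcal{A}))}$. Since every Connes amenable dual Banach algebra is $\varphi$-Connes amenable for every $wk^\ast$-continuous character, Connes amenability of $UP(I,\mathcal{A})$ forces $\Phi$-Connes amenability, and now Proposition \ref{net} applies.

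The key step is to exploit the net supplied by Proposition \ref{net}. Applying the proposition to $UP(I,\mathcal{A})$ and $\Phi$ yields a bounded net $\set{u_\alpha}$ in $UP(I,\mathcal{A})\hat{\otimes}UP(I,\mathcal{A})$ with $a\cdot u_\alpha-\Phi(a)u_\alpha\xrightarrow{wk^\ast}0$ and $\langle u_\alpha,\Phi\otimes\Phi\rangle\to1$. The plan is to feed carefully chosen elements $a$ into condition (i) and to test against carefully chosen functionals. To prove $\mathcal{A}$ is Connes amenable, I would project or compress the net down to $\mathcal{A}\hat{\otimes}\mathcal{A}$ using the corner at the fixed index $i_0$, verifying that the compressed net satisfies the hypotheses of Proposition \ref{net} for $\mathcal{A}$ and $\varphi$; combined with the standard fact that for a unital dual Banach algebra $\varphi$-Connes amenability together with suitable structure recovers Connes amenability, this gives the conclusion on $\mathcal{A}$.

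To force $I$ to be singleton is where I expect the main obstacle. The idea is that if $I$ contained two distinct comparable indices $i_0<j_0$, then the strictly-upper-triangular matrix unit $E_{i_0,j_0}$ (with the unit of $\mathcal{A}$ in position $(i_0,j_0)$) lies in $UP(I,\mathcal{A})$, satisfies $\Phi(E_{i_0,j_0})=0$, and is nilpotent; testing condition (i) of Proposition \ref{net} with $a=E_{i_0,j_0}$ gives $E_{i_0,j_0}\cdot u_\alpha\xrightarrow{wk^\ast}0$, while pairing with a suitably constructed element of $\sigma{wc}(\cdot)^{\ast}$ built from $\varphi$ on the $(i_0,j_0)$ corner should contradict the normalization $\langle u_\alpha,\Phi\otimes\Phi\rangle\to1$. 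The delicate point is choosing the test functional so that it is genuinely in $\sigma{wc}(UP(I,\mathcal{A})\hat{\otimes}UP(I,\mathcal{A}))^{\ast}$ and that the two limits are incompatible; managing the $wk^\ast$-topology on the large bidual and tracking the matrix bookkeeping through the tensor product is the technical heart of the argument.
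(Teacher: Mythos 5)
There is a genuine gap: the contradiction that forces $\lvert I\rvert=1$ — which you yourself identify as ``the technical heart'' — is never actually carried out, and the route you sketch for it is not the one that works. Pairing $E_{i_0,j_0}\cdot u_\alpha$ against a hand-built functional in $\sigma wc(\A\hat{\otimes}\A)^{\ast}$ runs into the problem that $\Phi(E_{i_0,j_0}\,a)=0$ for \emph{every} upper triangular $a$ (the $(i_0,i_0)$ entry of $E_{i_0,j_0}a$ is $a_{j_0,i_0}=0$), so no tension with the normalization $\langle u_\alpha,\Phi\otimes\Phi\rangle\to1$ arises from that pairing alone; you would need a genuinely different test object, and you do not produce one. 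The paper sidesteps this entirely by two moves you are missing. First, it invokes Runde's result that a Connes amenable dual Banach algebra is unital, and a unital upper triangular matrix algebra forces $I$ finite; this is what legitimizes taking the character at the \emph{maximal} index $i_n$ (your ``arbitrary fixed $i_0$'' loses the crucial fact that the last-column subspace $L$ is a two-sided $wk^\ast$-closed ideal — two-sidedness fails for a non-maximal column). Second, instead of wrestling with the $wk^\ast$-topology on $(\sigma wc(\A\hat{\otimes}\A)^{\ast})^{\ast}$, it applies the $wk^\ast$-continuous map $\pi_{\sigma wc}$ to push the net $u_\alpha$ down to $m_\alpha=\pi(u_\alpha)\in\A$, obtaining $am_\alpha-\psi(a)m_\alpha\xrightarrow{wk^\ast}0$ and $\psi(m_\alpha)\to1$ inside the algebra itself; multiplying by a suitable $\lambda\in L$ then reduces everything to elementary matrix bookkeeping and the $wk^\ast$-continuity of $\phi$.

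A second, smaller but real error: your plan to prove $\mathcal{A}$ is Connes amenable by compressing the net to get $\varphi$-Connes amenability of $\mathcal{A}$ and then ``recovering'' Connes amenability cannot work — $\varphi$-Connes amenability is strictly weaker than Connes amenability even for nice algebras (Section 4 of the paper exhibits exactly such examples). The correct deduction is trivial once $I$ is a singleton: then $\A\cong\mathcal{A}$ as dual Banach algebras, so Connes amenability of $\A$ \emph{is} Connes amenability of $\mathcal{A}$. You should reorganize the forward direction as: (1) unitality forces $I$ finite; (2) the net argument at the maximal index forces $\lvert I\rvert=1$; (3) the identification $\A\cong\mathcal{A}$ finishes.
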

\begin{proof} 
Let $UP(I,\mathcal{A})$ be Connes amenable. Then by \cite[proposition 4.1]{Runde:2001}, $UP(I,\mathcal{A})$ has an identity element. But every matrix algebra with unit must be finite dimensional. So in this case $I$ is a finite set.

Assume that $I=\lbrace{i_{1},...,i_{n}}\rbrace$ and $\phi\in{\Delta_{{wk}^{\ast}}{(\mathcal{A})}}$. We define a map $\psi:{UP(I,\mathcal{A})}\longrightarrow\mathbb{C}$ by          $\left[ a_{i,j}\right] _{i,j\in{I}}\longmapsto\phi{(a_{i_{n},i_{n}})}$ for every $\left[ a_{i,j}\right] _{i,j\in{I}}\in{{UP(I,\mathcal{A})}}$.

 
 Since $\phi$ is $wk^\ast$-continuous, $\psi\in{\Delta_{{wk}^{\ast}}{(UP(I,\mathcal{A}))}}$. Now apply \cite[Theorem 2.2]{Mahmmody:2014}, one can see that  ${UP(I,\mathcal{A})}$ is $\psi$-Connes amenable. Using Proposition \ref{net}, there exists a bounded net $(u_{\alpha})\subseteq{\A}\hat{\otimes}{\A}$ such that
\begin{equation}\label{3.4}
a\cdot{\hat{u}_{\alpha}}\vert_{\sigma{wc}({\A}\hat{\otimes}{\A})^{\ast}}-\psi{(a)}{\hat{u}_{\alpha}}\vert_{\sigma{wc}({\A}\hat{\otimes}{\A})^{\ast}}{\overset{wk^\ast}{\longrightarrow}}0\qquad({a}\in{\A})
\end{equation}
and
\begin{equation}\label{3.5}
\langle{u_{\alpha},\psi\otimes\psi}\rangle\longrightarrow{1},
\end{equation}
where  $\psi\otimes\psi\in{\sigma{wc}({\A}\hat{\otimes}{\A})^{\ast}}$ and $\psi\otimes\psi(a\otimes{b})=\psi(a)\psi(b)$ for every ${a,b}\in{\A}$. 

It is well known that the map $\pi_{\sigma{wc}}:{(\sigma{wc}({\A}\hat{\otimes}{\A})^{\ast})^{\ast}}\longrightarrow\A$ is $wk^\ast$-continuous. So by (\ref{3.4}) we have
\begin{equation*}
a\cdot\pi_{\sigma{wc}}({\hat{u}_{\alpha}}\vert_{\sigma{wc}({\A}\hat{\otimes}{\A})^{\ast}})-\psi{(a)}\pi_{\sigma{wc}}({\hat{u}_{\alpha}}\vert_{\sigma{wc}({\A}\hat{\otimes}{\A})^{\ast}}){\overset{wk^\ast}{\longrightarrow}}0\qquad({a}\in{\A}).
\end{equation*}
Let $\pi_{\sigma{wc}}({\hat{u}_{\alpha}}\vert_{\sigma{wc}({\A}\hat{\otimes}{\A})^{\ast}})=m_{\alpha}$. Then $(m_{\alpha})$ is a net in $\A$  that
satisfies
\begin{equation*}
a{m_{\alpha}}-\psi(a){m_{\alpha}}{\overset{wk^\ast}{\longrightarrow}}0\qquad({a}\in{\A}).
\end{equation*}
On the other hand for every  ${f}\in\A_{\ast}$ we have
\begin{equation*}
\begin{split}
\langle{f},\pi_{\sigma{wc}}({\hat{u}_{\alpha}}\vert_{\sigma{wc}({\A}\hat{\otimes}{\A})^{\ast}})\rangle&=\langle\pi^{\ast}\vert_{\A_{\ast}}(f),{\hat{u}_{\alpha}}\vert_{\sigma{wc}({\A}\hat{\otimes}{\A})^{\ast}}\rangle\\&=\langle\pi^{\ast}(f),{\hat{u}_{\alpha}}\vert_{\sigma{wc}({\A}\hat{\otimes}{\A})^{\ast}}\rangle\\&=\langle\pi^{\ast}(f),\hat{u}_{\alpha}\rangle 
=\langle{u}_{\alpha},\pi^{\ast}(f)\rangle\\&=\langle\pi(u_{\alpha}),f\rangle,
\end{split}
\end{equation*}
so
\begin{equation}\label{3.6}
m_{\alpha}=\pi_{\sigma{wc}}({\hat{u}_{\alpha}}\vert_{\sigma{wc}({\A}\hat{\otimes}{\A})^{\ast}})=\pi(u_{\alpha}).
\end{equation}
\\Fixed $\alpha$. Since $u_{\alpha}\in{{\A}\hat{\otimes}{\A}}$, there are $b^{\alpha}_{k}$ and ${c}^{\alpha}_{k}$ in $\A$ such that $u_{\alpha}=\sum\limits_{k=1}^{\infty}b^{\alpha}_{k}\otimes{c}^{\alpha}_{k}$. So by (\ref{3.5}), we have
\begin{equation*}
\begin{split}
\psi(\pi(u_{\alpha}))&=\psi(\pi(\sum\limits_{k=1}^{\infty}b^{\alpha}_{k}\otimes{c}^{\alpha}_{k}))=\psi(\sum\limits_{k=1}^{\infty}b^{\alpha}_{k}{c}^{\alpha}_{k})\\
&=\sum\limits_{k=1}^{\infty}\psi(b^{\alpha}_{k})\psi({c}^{\alpha}_{k})=\psi\otimes\psi(u_{\alpha})\longrightarrow1,
\end{split}
\end{equation*}
therefore by (\ref{3.6}), $\psi(m_{\alpha})\longrightarrow1$. 
Let $L=\lbrace{[a_{i,j}]}\in\A\mid{a_{i,j}=0},\quad\forall{j}\neq{i_{n}}\rbrace$. Since $I$ is a finite set, it is easy to see that $L$ is a $wk^\ast$-closed ideal in $\A$. By definition of the map $\psi$, we have $\psi\vert_{L}\neq0$. So there exists $\lambda\in{L}$ such that $\psi(\lambda)\neq0$, by replacing   $\frac{\lambda}{\psi(\lambda)}$ if necessary,  we may assume that $\psi(\lambda)=1$. Let $n_{\alpha}=m_{\alpha}\lambda$. Then $n_{\alpha}$ is a net in $L$. Since  $l{m_{\alpha}}-\psi(l)m_{\alpha}{\overset{wk^\ast}{\longrightarrow}}0$  for any $l\in{L}$ and since the multiplication in $\A$ is separately $wk^\ast$-continuous \cite[Exercise 4.4.1]{Runde:2002}, we have
\begin{equation}\label{3.7}
l{n_{\alpha}}-\psi(l)n_{\alpha}=(l{m_{\alpha}}-\psi(l)m_{\alpha})\lambda{\overset{wk^\ast}{\longrightarrow}}0,
\end{equation} for every $l\in{L}$ and also 
\begin{equation*}
\psi(n_{\alpha})=\psi(m_{\alpha})\psi(\lambda)=\psi(m_{\alpha})\longrightarrow1.
\end{equation*}
Now suppose that $\lvert{I}\rvert>1$. Set 
$n_{\alpha}=\left(\begin{array}{ccc} 0&\cdots&x_{1}^\alpha\\
\colon&\cdots&\colon\\
0&\cdots&x_{n}^\alpha
\end{array}
\right)$, where $x_{1}^\alpha,\ldots,{x}_{n}^\alpha\in{\mathcal{A}}$. Consider $l=\left(\begin{array}{ccc} 0&\cdots&l_{1}\\
\colon&\cdots&\colon\\
0&\cdots&l_{n}
\end{array}
\right)$, where $l_{1},\ldots,l_{n}\in\mathcal{A}$ and $\phi(l_{1})=\ldots=\phi(l_{n-1})=1$ but $\psi(l)=\phi(l_{n})=0$. So we have $ln_{\alpha}=\left(\begin{array}{ccc} 0&\cdots&l_{1}x_{n}^\alpha\\
\colon&\cdots&\colon\\
0&\cdots&l_{n}x_{n}^\alpha
\end{array}
\right)$. By (\ref{3.7}), we have $ln_{\alpha}{\overset{wk^\ast}{\longrightarrow}}0$. Since $I$ is a finite set, it is easy to see that $l_{1}x_{n}^\alpha{\overset{wk^\ast}{\longrightarrow}}0$. Since $\phi$ is $wk^\ast$-continuous, $\phi(l_{1}x_{n}^\alpha)\longrightarrow0$. So $\phi(l_1)\phi(x_{n}^\alpha)\longrightarrow0$. Since $\phi(l_1)=1$, $\phi(x_{n}^\alpha)\longrightarrow0$, which is a contradiction with $\phi(x_{n}^\alpha)=\psi(n_{\alpha})\longrightarrow1$. Thus $\lvert{I}\rvert=1$.
 
Converse is clear.
\end{proof}
\section{Examples} 
 Here we give two examples of $\phi$-Connes amenable dual Banach algebras, which are not Connes amenable.
\begin{Example}
	Let $\mathcal{H}$ be a Hilbert space with $\hbox{dim}\,\mathcal{H}>1$. Suppose that $\phi$ is a non-zero linear functional on $\mathcal{H}$ with $\parallel\phi\parallel\leq1$. Define $a\ast{b}=\phi(a)b$ for every $a,b\in\mathcal{H}$. One can easily show that $(\mathcal{H},\ast)$ is a Banach algebra and $\Delta(\mathcal{H})=\{\phi\}$. We claim that $(\mathcal{H},\ast)$ is a dual Banach algebra. By \cite[Exercise 4.4.1]{Runde:2002}, it is sufficient to show that the multiplication $\ast$ is separately $wk^\ast$-continuous. Let $(a_{\alpha})_{\alpha\in{I}}$ be a net in $\mathcal{H}$ such that $a_{\alpha}\xrightarrow{wk^\ast}a$ and let $b\in{\mathcal{H}}$. So $$b\ast{a_{\alpha}}=\phi({b})a_{\alpha}\xrightarrow{wk^\ast}\phi({b})a=b\ast{a}.$$ Since $\mathcal{H}^{\ast\ast}=\mathcal{H}$, $a_{\alpha}(\phi)\longrightarrow{a}(\phi)$. So $\phi(a_{\alpha})\longrightarrow\phi(a)$. Hence
	$$a_{\alpha}\ast{b}=\phi(a_{\alpha})b\longrightarrow\phi(a)b=a\ast{b}.$$ So $a_{\alpha}\ast{b}\xrightarrow{wk^\ast}a\ast{b}$. Thus $(\mathcal{H},\ast)$ is a dual Banach algebra. Already we have shown that $\phi$ is a $wk^\ast$-continuous character on $\mathcal{H}^{\ast}$. Pick $a_0$ in ${\mathcal{H}}$ such that $\phi(a_0)=1$. So $a\ast{a_0}=\phi(a){a_0}$ and $\phi(a_0)=1$ for every $a\in{\mathcal{H}}$. Thus ${\mathcal{H}}$ is $\phi$-amenable. Since $\mathcal{H}^{\ast\ast}=\mathcal{H}$ is a normal dual Banach $\mathcal{H}$-bimodule, by \cite[Proposition 4.4]{Runde:2004}, $\sigma{wc}(\mathcal{H}^{\ast})=\mathcal{H}^\ast=\mathcal{H}$. So ${a_0}\in{(\sigma{wc}(\mathcal{H}^{\ast}))^\ast}$ such that $a_0(\phi)=1$ and $$a_0(f\cdot{a})=f\cdot{a}(a_0)=f(a\ast{a_0})=f(\phi(a){a_0})=\phi(a){a_0}(f),$$ for every $a\in{\mathcal{H}}$ and $f\in{\sigma{wc}(\mathcal{H}^{\ast})}$. So ${\mathcal{H}}$ is $\phi$-Connes amenable. We assume conversely that ${\mathcal{H}}$ is Connes amenable. Then ${\mathcal{H}}$ has an identity, say $E$. So for every $a\in{\mathcal{H}}$, $\phi(a)E=a\ast{E}=E\ast{a}=a$. It follows that $a=\phi(a)E$ for every $a\in{\mathcal{H}}$. So $\hbox{dim}\,\mathcal{H}=1$, which is a contradiction.
\end{Example}
\begin{Example}
Set $\mathcal{A}=\left(\begin{array}{cc} \mathbb{C}&\mathbb{C}\\
	0&0\\
\end{array}
\right)$. With the usual matrix multiplication and $\ell^{1}$-norm, $\mathcal{A}$ is a Banach algebra. Since $\mathbb{C}$ is a dual Banach algebra, $\mathcal{A}$ is a dual Banach algebra. We define a map $\phi:\mathcal{A}\longrightarrow\mathbb{C}$ by $\phi\left(\begin{array}{cc} x&y\\
0&0\\
\end{array}
\right)=x$. It is clear that $\phi$ is linear and multiplicative. Suppose that $X_{\alpha}=\left(\begin{array}{cc} x_\alpha&y_\alpha\\
0&0\\
\end{array}
\right)$ and $X=\left(\begin{array}{cc} x&y\\
0&0\\
\end{array}
\right)$ are elements in $\mathcal{A}$ such that $X_{\alpha}\xrightarrow{wk^\ast}X$, it is easy to see that $x_\alpha\rightarrow{x}$, thus $\phi$ is $wk^\ast$-continuous and
   $\phi\in{\Delta_{{wk}^{\ast}}{(\mathcal{A})}}$.   Now  we show that $\mathcal{A}$ is not Connes amenable. If $\mathcal{A}$ is Connes amenable, then by applying \cite[Proposition 4.1]{Runde:2001}, $\mathcal{A}$ has an identity say $E=\left(\begin{array}{cc} x_{0}&y_{0}\\
  0&0\\
  \end{array}\right)$, where $x_{0},y_{0}\in{\mathbb{C}}$. 
  Since $\phi$ is a multiplicative functional, $\phi(E)=1$. So $x_{0}=1$. For every $a,b\in{\mathbb{C}}$, we have 
\begin{equation}\label{e3.8}
\left(\begin{array}{cc} a&b\\
0&0\\
\end{array}
\right)=\left(\begin{array}{cc} a&b\\
0&0\\
\end{array}
\right)\left(\begin{array}{cc} 1&y_{0}\\
0&0\\
\end{array}
\right)=\left(\begin{array}{cc} a&ay_{0}\\
0&0\\
\end{array}
\right),
\end{equation}
which implies that $ay_{0}=b$ for every $a,b\in{\mathbb{C}}$, which is a contradiction.
Hence $\mathcal{A}$ is not Connes amenable.  Next we show that $\mathcal{A}$ is $\phi$-Connes amenable. Let $u=\left(\begin{array}{cc} 1&1\\
0&0\\
\end{array}
\right)\otimes\left(\begin{array}{cc} 1&1\\
0&0\\
\end{array}
\right)\in{\mathcal{A}\hat{\otimes}\mathcal{A}}$. Since $\mathcal{A}\hat{\otimes}\mathcal{A}$ embeds in $(\sigma\omega{c}(\mathcal{A}\hat{\otimes}\mathcal{A})^{\ast})^{\ast}$, we may assume that $u$ is in $(\sigma\omega{c}(\mathcal{A}\hat{\otimes}\mathcal{A})^{\ast})^{\ast}$. Now for every $a,b\in{\mathbb{C}}$, we have
\begin{equation*}
\begin{split}
\left(\begin{array}{cc} a&b\\
0&0\\
\end{array}
\right)\cdot{u}&=\left(\begin{array}{cc} a&b\\
0&0\\
\end{array}
\right)\left(\begin{array}{cc} 1&1\\
0&0\\
\end{array}
\right)\otimes\left(\begin{array}{cc} 1&1\\
0&0\\
\end{array}
\right)=\left(\begin{array}{cc} a&a\\
0&0\\
\end{array}
\right)\otimes\left(\begin{array}{cc} 1&1\\
0&0\\
\end{array}
\right)\\
&=a\left(\begin{array}{cc} 1&1\\
0&0\\
\end{array}
\right)\otimes\left(\begin{array}{cc} 1&1\\
0&0\\
\end{array}
\right)=\phi\left(\begin{array}{cc} a&b\\
0&0\\
\end{array}
\right)\left(\begin{array}{cc} 1&1\\
0&0\\
\end{array}
\right)\otimes\left(\begin{array}{cc} 1&1\\
0&0\\
\end{array}
\right)\\
&=\phi\left(\begin{array}{cc} a&b\\
0&0\\
\end{array}
\right)u.
\end{split}
\end{equation*}
and also
\begin{equation*}
\begin{split}
\langle{u},\phi\otimes\phi\rangle&=\phi\left(\begin{array}{cc} 1&1\\
0&0\\
\end{array}
\right)\phi\left(\begin{array}{cc} 1&1\\
0&0\\
\end{array}
\right)\\
&=1\times1=1.
\end{split}
\end{equation*}
Now by  Proposition 3.1, $\mathcal{A}$ is $\phi$-Connes amenable.
\end{Example}

\begin{small}
	
\end{small}

\end{document}